\documentclass[a4paper, 12pt, reqno]{amsart} 
\allowdisplaybreaks
\usepackage{amsmath,amssymb,amsthm}
\usepackage{microtype}
\usepackage[left=2.5cm, right=2.5cm, top=3cm, bottom=3cm]{geometry} 
\usepackage[colorlinks = true, urlcolor = blue, linkcolor = blue,
            citecolor = blue]{hyperref}
\usepackage{xcolor}
\usepackage[shortlabels]{enumitem}

\newtheorem{theorem}{Theorem}

\newtheorem{proposition}{Proposition}
\theoremstyle{definition}

\newtheorem{remark}{Remark}
\newtheorem{assumption}{Assumption}

\newcommand{\E}{\mathbb{E}}
\newcommand{\Prb}{\mathbb{P}}
\newcommand{\R}{\mathbb{R}}

\newcommand{\F}{\mathcal{F}}

\begin{document}
\title[Reinforced random walks with geometric inter-transition times]{Reinforced random walks with geometric inter-transition times}

\author[M. G. Coelho]{Mirela Gra\c{c}adio Coelho}
\author[F. P. A. Prado]{Fernando P. A. Prado}

\address[M. G. Coelho and F. P. A. Prado]{Departamento de Computa\c{c}\~ao e
  Matem\'atica, Universidade de S\~ao Paulo, 
  Avenida Bandeirantes 3900, Ribeir\~ao Preto, S\~ao Paulo, 
  14040-901, Brasil}
\email{mirela.coelho@usp.br, feprado@usp.br}

\subjclass[2020]{Primary 60K35, Secondary 60F15, 62L20}
\keywords{reinforced random walk, stochastic approximation, geometric inter-transition times, vertex occupation measure}
\date{\today}

\begin{abstract}
We consider interacting vertex-reinforced random walks on a finite graph, each transitioning according to independent geometric holding times of parameter $p_i \in (0,1]$. Letting $x=X(n)$ be the vector of vertex-occupation proportions up to time $n$, the one-step transition probabilities of walk $i$ are governed by $Q^i(x,p_i)=p_i\Pi^i(x)+(1-p_i)I$, where $\Pi^i(x)$ has rows equal to a probability measure $\pi^i(x)$ on the vertex set and $I$ is the identity. Its unique invariant measure is thus $\pi^i(x)$, independent of $p_i$. Consequently, the limiting points of $X(n)$ coincide with those of the simultaneous-transition model ($p_i=1$): the solutions of $x=\pi(x)$. However, almost sure convergence is non-trivial: the standard stochastic-approximation approach requires the Clark-Kushner condition, which is not immediate since the stochastic input is biased by the walk current state. We overcome this via a decomposition of the input into a martingale and a geometrically decaying correction, establishing almost sure convergence.
\end{abstract}

\maketitle

\section{Introduction}\label{sec:intro}

Reinforced stochastic processes form a broad class of models in which the probability of future transitions depends on the accumulated history of past events. The defining feature is a feedback mechanism: as certain states are visited more frequently, transitions to those states may become increasingly likely (positive reinforcement) or less likely (negative reinforcement). This memory dependence renders such processes inherently non-Markovian.

A prominent example is the vertex-reinforced random walk, introduced by Pemantle~\cite{P92} building on earlier work by Coppersmith and Diaconis~\cite{CD87} on edge-reinforced walks. In this model, a walker on a graph $G = (V, E)$ chooses its next vertex with probabilities that depend on how often each neighbouring vertex has been visited. One striking phenomenon is \emph{localisation}: under various conditions, the walk eventually confines itself to a strict subset of $V$, visiting only a random but fixed collection of vertices from some time onward. This behaviour has been established for several classes of graphs and weight functions; see~\cite{B97, PV99, V01, T04, BT11, BSS14, CT2017} and references therein.

While self-reinforced walks have been extensively studied, models involving \emph{multiple interacting} reinforced walks remain comparatively unexplored. Notable contributions include~\cite{C14, CDLM19, RPP22, PCR2023}, and more recently~\cite{PR2025}, which develops a general framework for interacting vertex-reinforced random walks on complete sub-graphs and establishes almost sure convergence of the joint vertex occupation measure to the fixed points of the transition map.

In the models cited above, all walks transition simultaneously at discrete times $n = 1, 2, 3, \ldots$. A natural question arises: what happens when different walks transition at different rates? In this article, we address this question by considering a model where each walk $i$ transitions at random times, with the waiting times between consecutive transitions being  independent geometric random variables with parameter $p_i \in (0,1]$. During a given time interval, one walk may transition more frequently than another, while the transition probability of each walk depends on the entire history of visits made by all walks to all vertices.

More precisely, let $W^i(n) \in \{1, \ldots, d\}$ denote the position of walk $i$ at time $n$ on a complete graph with $d \geq 2$ vertices, let $\xi^i_v(n) = \mathbf{1}\{W^i(n) = v\}$ be the indicator of a visit to vertex $v$, and let $X^i_v(n) = \frac{1}{n+1}\sum_{\ell=0}^{n} \xi^i_v(\ell)$ be the proportion of time that walk $i$ has spent at vertex $v$ up to time $n$. We collect these proportions into a vector $x = (X^1(n), \ldots, X^m(n)) \in \triangle^m$, where $\triangle$ denotes the $(d-1)$-simplex. Let $\pi^i_v: \triangle^m \to [0,1]$, $v = 1, \ldots, d$, $i = 1, \ldots, m$, be smooth functions satisfying $\sum_{v=1}^{d} \pi^i_v(x) = 1$ for all $x \in \triangle^m$. The transition probability of walk $i$ from vertex $w$ to vertex $v$, given the current proportion vector $x$, is
\[
q^i_{wv}(x, p_i) = 
\begin{cases}
p_i \pi^i_v(x) & \text{if } v \neq w,\\[4pt]
1 - p_i \sum_{k \neq w} \pi^i_k(x) & \text{if } v = w.
\end{cases}
\]
Using the identity $\sum_{k \neq w} \pi^i_k(x) = 1 - \pi^i_w(x)$, the transition matrix $Q^i(x, p_i) = (q^i_{wv}(x, p_i))_{w,v}$ can be written as
\[
Q^i(x, p_i) = p_i \Pi^i(x) + (1-p_i)I,
\]
where $\Pi^i(x)$ is the $d \times d$ matrix with all rows equal to $\pi^i(x) = (\pi^i_1(x), \ldots, \pi^i_d(x))$ and $I$ is the identity matrix. It follows immediately that $\pi^i(x)$ is the unique invariant measure of $Q^i(x, p_i)$, just as it is for $\Pi^i(x)$. Consequently, the equilibrium points of the system---the zeros of the vector field $F(x) = -x + \pi(x)$---are the same as in the simultaneous-transition model where $p_i = 1$ for all $i$.

However, verifying almost sure convergence to these equilibria is considerably more delicate. The standard approach via stochastic approximation relies on the Clark-Kushner condition~\eqref{eqn:clark_kushner}, which requires controlling the asymptotic behaviour of certain weighted sums of the stochastic input $U(n) = \xi(n) - \pi(X(n-1))$, where $\xi(n) = (\xi^1(n), \ldots, \xi^m(n))$ with $\xi^i(n) = (\xi^i_1(n), \ldots, \xi^i_d(n))$. In the simultaneous-transition case ($p_i = 1$), we have $\E[\xi(n+1)|\F_n] = \pi(X(n))$, so $U(n+1)$ is a martingale difference and convergence follows from standard martingale theory. In our setting, however,
\[
\E[\xi(n+1)|\F_n] = p\,\pi(X(n)) + (1-p)\,\xi(n),
\]
which is a convex combination of the target distribution and the current state. This introduces a bias that prevents direct application of the martingale argument.

Our main contribution is to overcome this obstacle by exploiting the recursive structure induced by the geometric waiting times. We show that the error term can be decomposed as
\[
U(n) = D(n) + (1-p)U(n-1) + E(n-1),
\]
where $D(n)$ is a martingale difference and $E(n-1)$ is a controlled correction. The factor $(1-p) < 1$ provides geometric decay, which converts potentially divergent sums into bounded series. This decomposition allows us to verify the Clark-Kushner condition and establish that the vertex occupation measure converges almost surely to the same equilibria as in the simultaneous-transition model.

The remainder of this article is organised as follows. Section~\ref{sec:model} introduces the model rigorously and presents the stochastic approximation framework. Section~\ref{sec:main} contains our main result and its proof. Section~\ref{sec:remarks} offers concluding remarks and directions for future work.

\section{Model and Preliminaries}\label{sec:model}

\subsection{Model definition}\label{subsec:model}

We present a generalisation of the models in~\cite{RPP22}, \cite{PCR2023} and~\cite{PR2025} to the case where $m \geq 1$ random walks transition from one vertex to another on a finite graph with $d \geq 2$ vertices at independent geometric inter-transition times with distinct parameters.

Given $m \geq 1$ independent sequences of independent and identically distributed geometric random variables $G^i_1, G^i_2, \ldots$ with parameter $p_i \in (0, 1]$, $i = 1, 2, \ldots, m$, we define the $k$-th transition time of walk $i$ as
\[
\tau^i_k = G^i_1 + G^i_2 + \cdots + G^i_k, \quad k = 1, 2, 3, \ldots
\]

We define the location of walk $i$ at time $n$ as a random vertex $W^i(n) \in \{1, 2, \ldots, d\}$ of the graph, where $\Prb(W^i(0) = v) = \alpha^i_v$, with $W^i(0)$, $i = 1, 2, \ldots, m$, being independent random variables and $\alpha = (\alpha^1, \alpha^2, \ldots, \alpha^m) \in \triangle^m$ a parameter of the model, where $\triangle$ denotes the $(d-1)$-dimensional simplex.

Regarding the dynamics of the walks, we postulate that $W^i(n)$ remains constant for $\tau^i_{k-1} < n \leq \tau^i_k$, with possible jumps only at times $n = \tau^i_k$, where $\tau^i_0 = 0$ and $k = 1, 2, \ldots$

For $n = \tau^i_k$, $k \geq 1$, the transition probability of walk $i$ to vertex $v \in \{1, 2, \ldots, d\}$ at its transition time $n = \tau^i_k$ is given by:
\begin{equation}\label{eqn:trans_prob}
\Prb\big(W^i(n+1) = v \mid X(n) = x\big) = \pi^i_v(x),
\end{equation}
where $\pi^i_v(x)$ is a continuous function assigning to each point $x \in \triangle^m$ a number $\pi^i_v(x) \in [0,1]$, and
\[
X(n) = \big(X^1_1(n), \ldots, X^1_d(n), \ldots, X^m_1(n), \ldots, X^m_d(n)\big),
\]
with $X^i_v(n)$ being the random proportion of time that walk $i$ spends at vertex $v$ up to time $n = 0, 1, 2, \ldots$

More precisely,
\begin{equation}\label{eqn:occupation}
X^i_v(n) = \frac{1}{n+1}\sum_{\ell=0}^{n} \xi^i_v(\ell), \quad \text{where} \quad \xi^i_v(\ell) = 
\begin{cases}
1 & \text{if } W^i(\ell) = v,\\
0 & \text{if } W^i(\ell) \neq v.
\end{cases}
\end{equation}

Thus, for any $n = 1, 2, \ldots$, the transition probability of $W^i(n)$ can be formulated as follows:
\begin{equation}\label{eqn:trans_matrix_entry}
\Prb\big(W^i(n+1) = v \mid W^i(n) = w, X(n) = x\big) = q^i_{wv}(x, p_i),
\end{equation}
where
\begin{equation}\label{eqn:q_def}
q^i_{wv}(x, p_i) = 
\begin{cases}
p_i \pi^i_v(x) & \text{if } v \neq w,\\[4pt]
1 - p_i \displaystyle\sum_{k \neq w} \pi^i_k(x) & \text{if } v = w.
\end{cases}
\end{equation}

\subsection{Invariant measure}\label{subsec:invariant}

Let $Q^i(x, p_i)$ be the $d \times d$ matrix whose entry in row $w$ and column $v$ is given by $q^i_{wv}(x, p_i)$ as defined in~\eqref{eqn:q_def}. Using the identity $\sum_{k \neq w} \pi^i_k(x) = 1 - \pi^i_w(x)$, we can write
\begin{equation}\label{eqn:Q_structure}
Q^i(x, p_i) = p_i \Pi^i(x) + (1-p_i)I,
\end{equation}
where $\Pi^i(x)$ is the $d \times d$ matrix with all rows equal to $\pi^i(x) = (\pi^i_1(x), \ldots, \pi^i_d(x))$ and $I$ is the identity matrix. Since $\pi^i(x) \Pi^i(x) = \pi^i(x)$, it follows that 
 $\pi^i(x)$ is the unique invariant measure of $Q^i(x, p_i)$, independently of $p_i$. In particular, the models in~\cite{RPP22}, \cite{PCR2023} and~\cite{PR2025} correspond to the special case $p_i = 1$ for all $i$.

\subsection{Stochastic approximation framework}\label{subsec:stoch_approx}

Let $F: \triangle^m \to \R^{dm}$ be the function given by
\begin{equation}\label{eqn:vector_field}
F(x) = -x + \pi(x),
\end{equation}
where $\pi: \triangle^m \to \triangle^m$ is defined by $\pi(x) = (\pi^1(x), \pi^2(x), \ldots, \pi^m(x))$, with $\pi^i(x) = (\pi^i_1(x), \pi^i_2(x), \ldots, \pi^i_d(x))$.

\begin{assumption}\label{ass:lipschitz}
We assume that $F: \triangle^m \to \R^{md}$ is a Lipschitz continuous vector field.
\end{assumption}

\begin{assumption}\label{ass:lyapunov}
We assume that there exists a strict Lyapunov function $L$ for the forward invariant set $F^{-1}(0)$ of the semi-flow induced by the vector field $F$ defined in~\eqref{eqn:vector_field}, such that $L(F^{-1}(0))$ is finite.
\end{assumption}

Let $\F_n$ be the sigma-algebra generated by $W(0), W(1), \ldots, W(n)$, $n = 0, 1, \ldots$

\begin{remark}\label{rmk:norm}
Throughout this exposition, we fix on $\R^d$ the norm $\|v\| := \sum_{j=1}^{d} |v_j|$ and for $d \times d$ matrices the induced operator norm: $\|A\| := \max_i \sum_j |A_{ij}|$ (maximum of the absolute row sums).
\end{remark}

\begin{theorem}\label{thm:stoch_approx}
Suppose that the difference $X(n+1) - X(n)$ can be written as
\begin{equation}\label{eqn:recursion}
X(n+1) - X(n) = \gamma_n \big(F(X(n)) + U(n+1)\big),
\end{equation}
where $\gamma_n \in \R_+$ and $U(n+1) \in \R^{md}$ are such that $\lim_{n \to \infty} \gamma_n = 0$, $\sum_{n \geq 0} \gamma_n = \infty$, $\sum_{n \geq 0} \gamma_n^2 < \infty$, and $U(n+1) \in \F_{n+1}$.

If, for $\tau_0 = 0$, $\tau_n = \sum_{k=0}^{n-1} \gamma_k$, and $T > 0$,
\begin{equation}\label{eqn:clark_kushner}
\lim_{n \to \infty} \left(\sup_{r:\, 0 \leq \tau_r - \tau_n \leq T} \left\|\sum_{k=n}^{r-1} U(k) \gamma_k\right\|\right) = 0 \quad \text{almost surely},
\end{equation}
then, under Assumptions~\ref{ass:lipschitz} and~\ref{ass:lyapunov}, the set of accumulation points of $X(n)$, denoted by $L(X)$, is connected and satisfies $L(X) \subseteq F^{-1}(0)$. In particular, if $F^{-1}(0)$ is countable, then $X(n)$ converges to some point of $F^{-1}(0)$ almost surely.
\end{theorem}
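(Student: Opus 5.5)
This is the standard Kushner--Clark/Benaïm limit-set theorem, so I would prove it through the asymptotic pseudotrajectory machinery rather than directly.

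\textbf{Step 1: interpolation.} Let $\Phi$ be the semi-flow of $\dot x = F(x)$. It is well defined and unique by Assumption~\ref{ass:lipschitz}, and $\triangle^m$ is forward invariant because $\pi$ maps $\triangle^m$ into itself and $F(x)=\pi(x)-x$ points inward. Define the piecewise linear interpolation $\bar X(t)$ by $\bar X(\tau_n)=X(n)$, linear on $[\tau_n,\tau_{n+1}]$; the hypothesis $\sum\gamma_n=\infty$ ensures $\bar X$ is defined on all of $[0,\infty)$.

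\textbf{Step 2: $\bar X$ is an asymptotic pseudotrajectory.} I will show that for every $T>0$,
\[
\lim_{t\to\infty}\sup_{0\le h\le T}\|\bar X(t+h)-\Phi_h(\bar X(t))\|=0 \quad\text{a.s.}
\]
Summing \eqref{eqn:recursion} gives, for $\tau_r-\tau_n\le T$,
\[
X(r)=X(n)+\sum_{k=n}^{r-1}\gamma_k F(X(k))+\sum_{k=n}^{r-1}\gamma_k U(k+1).
\]
Compare this with the integral equation for $\Phi_h(X(n))$. The difference splits into three pieces:
\begin{enumerate}[(a)]
\item a Riemann-sum discretization error, bounded by $C\sup_{k\ge n}\gamma_k$, using boundedness of $F$ on the compact simplex and its Lipschitz property;
\item the noise sum, which tends to $0$ uniformly by \eqref{eqn:clark_kushner} (the index shift $U(k)$ versus $U(k+1)$ is harmless since $\gamma_k\to0$ and $U$ is bounded, $\|U\|\le 2m$ here);
\item a Lipschitz feedback term $\mathrm{Lip}(F)\int_0^h\|\bar X(\tau_n+s)-\Phi_s(\bar X(\tau_n))\|\,ds$.
\end{enumerate}
Gronwall's inequality then gives the uniform bound $e^{\mathrm{Lip}(F)T}(\text{(a)}+\text{(b)})\to0$. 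Intermediate times $t$ are handled by the linear interpolation. This is exactly Benaïm's Proposition 4.1 argument; $\sum\gamma_n^2<\infty$ is not needed beyond what \eqref{eqn:clark_kushner} already encodes.

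\textbf{Step 3: limit-set structure.} By Benaïm's theorem, the limit set of a bounded asymptotic pseudotrajectory is internally chain transitive: it is compact, connected, invariant, and has no proper attractor. Boundedness is automatic in $\triangle^m$. Since $L(X)$ coincides with the limit set of $\bar X$, this gives compactness and connectedness of $L(X)$.

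\textbf{Step 4: localization in $F^{-1}(0)$.} With a strict Lyapunov function $L$ for $\Lambda=F^{-1}(0)$ such that $L(\Lambda)$ has empty interior (here it is finite), Benaïm's Proposition 6.4 gives that every internally chain transitive set lies in $\Lambda$ and $L$ is constant on it. The idea is that $L$ strictly decreases along orbits outside $\Lambda$. Because $L(\Lambda)$ has empty interior, one picks a regular value $c\notin L(\Lambda)$ between $\min L$ and $\max L$ on the set. The sublevel set $\{L<c\}$ then yields an attractor, contradicting the absence of proper attractors unless the set is contained in $\Lambda$.

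\textbf{Step 5: countable case.} If $F^{-1}(0)$ is countable, a connected compact subset of it must be a single point, since a connected metric space with at least two points is uncountable. Hence $X(n)$ converges almost surely to a point of $F^{-1}(0)$.

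The main obstacle is Step 4, specifically the chain-transitivity-versus-Lyapunov argument. I would cite Benaïm (Dynamics of stochastic approximation algorithms, Prop.~6.4 and Thm.~5.7) rather than reprove it, verifying only that the hypotheses match: $\triangle^m$ compact and forward invariant, $F$ Lipschitz, and $L(F^{-1}(0))$ finite.
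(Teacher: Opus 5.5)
Your proposal is correct and follows the paper's route. The paper gives no argument beyond invoking Bena{\"i}m's dynamical-systems approach, and your Steps 1--5 unpack exactly that approach: interpolation plus Gronwall to obtain an asymptotic pseudotrajectory, internal chain transitivity of the limit set, Lyapunov localisation using that $L(F^{-1}(0))$ is finite, and connectedness in the countable case. The one point to tighten is the index shift in (b): it is harmless because $\gamma_n=1/(n+2)$ is nonincreasing, so telescoping gives $\bigl\|\sum_{k=n}^{r-1}\gamma_k\bigl(U(k+1)-U(k)\bigr)\bigr\|\le 2\sup_k\|U(k)\|\,\gamma_n$, whereas $\gamma_k\to0$ and bounded $U$ alone would not suffice for non-monotone step sizes.
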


The proof of Theorem~\ref{thm:stoch_approx} follows from the dynamical systems approach to stochastic approximations as presented in Bena{\"i}m~\cite{B96, B99}.

\begin{proposition}\label{prop:recursion}
The process $X$ satisfies condition~\eqref{eqn:recursion} of Theorem~\ref{thm:stoch_approx}.
\end{proposition}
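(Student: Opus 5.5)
The claim reduces to an algebraic identity for the running averages in~\eqref{eqn:occupation}, with the right choice of step size and stochastic input. Fix a walk $i$ and a vertex $v$. From the definition,
\[
(n+2)X^i_v(n+1) = (n+1)X^i_v(n) + \xi^i_v(n+1),
\]
and rearranging gives
\[
X^i_v(n+1) - X^i_v(n) = \frac{1}{n+2}\bigl(\xi^i_v(n+1) - X^i_v(n)\bigr).
\]
Stacking over all $i$ and $v$, this reads $X(n+1)-X(n) = \gamma_n\bigl(\xi(n+1) - X(n)\bigr)$ with $\gamma_n := 1/(n+2)$.

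Next I would add and subtract $\pi(X(n))$ to split the increment into the vector field~\eqref{eqn:vector_field} and a noise term:
\[
\xi(n+1) - X(n) = \bigl(-X(n) + \pi(X(n))\bigr) + \bigl(\xi(n+1) - \pi(X(n))\bigr) = F(X(n)) + U(n+1),
\]
where $U(n+1) := \xi(n+1) - \pi(X(n))$. This is exactly the stochastic input $U$ named in the introduction, so the representation~\eqref{eqn:recursion} holds.

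It remains to check the side conditions of Theorem~\ref{thm:stoch_approx}.
\begin{itemize}
\item The step sizes satisfy $\gamma_n = 1/(n+2) \to 0$.
\item They satisfy $\sum_n \gamma_n = \infty$, since the harmonic series diverges.
\item They satisfy $\sum_n \gamma_n^2 < \infty$, since $\sum 1/k^2$ converges.
\item For measurability, $\xi(n+1)$ is a function of $W(n+1)$. Also, $X(n)$ is a function of $W(0),\dots,W(n)$, and $\pi$ is continuous, hence Borel. Therefore $U(n+1)$ is $\F_{n+1}$-measurable.
\end{itemize}

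I do not expect a genuine obstacle here. The only point needing care is consistency of indices: the recursion must pair $X(n)$ with $\xi(n+1)$ and $\gamma_n = 1/(n+2)$, not $1/(n+1)$, because the average $X(n)$ includes the time-$0$ term.

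The difficulty the paper emphasises belongs to the next step, not this one. The recursion itself does not use the geometric holding times at all; they enter only through the bias $\E[U(n+1)\mid\F_n] = (1-p)\bigl(\xi(n) - \pi(X(n))\bigr)$. That bias matters only for the Clark--Kushner condition~\eqref{eqn:clark_kushner}, which is not part of this proposition.
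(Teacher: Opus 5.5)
Your proposal is correct and follows the paper's proof: the same choice $\gamma_n = 1/(n+2)$ and $U(n+1) = \xi(n+1) - \pi(X(n))$, verified via the running-average identity $(n+2)X^i_v(n+1) = (n+1)X^i_v(n) + \xi^i_v(n+1)$. You also explicitly check $\gamma_n \to 0$, $\sum_n \gamma_n = \infty$, $\sum_n \gamma_n^2 < \infty$ and the $\F_{n+1}$-measurability of $U(n+1)$, which makes your write-up slightly more complete than the paper's, since the paper only notes adaptedness.
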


\begin{proof}
To obtain~\eqref{eqn:recursion}, we define
\begin{equation}\label{eqn:gamma_U_def}
\gamma_n = \frac{1}{n+2} \quad \text{and} \quad U^i_v(n+1) = \xi^i_v(n+1) - \pi^i_v(X(n)).
\end{equation}
Note that $U^i_v(n+1)$ and $\xi^i_v(n+1)$ are adapted to $\F_{n+1}$. Taking into account that $F^i_v(x) = -x^i_v + \pi^i_v(x)$, and substituting~\eqref{eqn:gamma_U_def} into the right-hand side of~\eqref{eqn:recursion}, we obtain, component by component,
\[
\gamma_n \big(F^i_v(X(n)) + U^i_v(n+1)\big) = \frac{1}{n+2}\big(\xi^i_v(n+1) - X^i_v(n)\big).
\]
To conclude~\eqref{eqn:recursion}, it remains to prove that $X^i_v(n+1) - X^i_v(n)$ equals the right-hand side above. Taking into account the definition of $X^i_v(n)$ in~\eqref{eqn:occupation}, we have $X^i_v(n+1) = \big((n+1)X^i_v(n) + \xi^i_v(n+1)\big)/(n+2)$. Therefore,
\[
X^i_v(n+1) - X^i_v(n) = \frac{1}{n+2}\big(\xi^i_v(n+1) - X^i_v(n)\big).
\]
Thus we have shown that, for $\gamma_n$ and $U(n+1)$ defined by~\eqref{eqn:gamma_U_def}, equation~\eqref{eqn:recursion} holds, with $U(n+1) \in \F_{n+1}$.
\end{proof}

\section{Main Result}\label{sec:main}

Our goal is to identify the convergence points of the process $X$ as the zeros of $F$, by applying Theorem~\ref{thm:stoch_approx}. The main obstacle is that the characterisation of $X$ as a stochastic approximation faces a difficulty: $\pi(X(n)) \neq \E[\xi(n+1)|\F_n]$. Indeed, $\E[\xi(n+1)|\F_n]$ depends not only on $X(n)$ but also on the states visited by the walks at the immediately preceding time $n$. If we had $\pi(X(n)) = \E[\xi(n+1)|\F_n]$, then $U(n+1)\gamma_{n+1} = (\xi(n+1) - \E[\xi(n+1)|\F_n])\gamma_{n+1}$ would be a martingale difference, from which condition~\eqref{eqn:clark_kushner} would follow; see~\cite{RPP22}. The main challenge of this work consists precisely in proving~\eqref{eqn:clark_kushner} in the present case where $U(n+1)\gamma_{n+1} \neq (\xi(n+1) - \E[\xi(n+1)|\F_n])\gamma_{n+1}$.

\begin{theorem}\label{thm:main}
Let $\gamma_n > 0$ and $U(n+1) \in \F_{n+1}$ be as introduced in~\eqref{eqn:gamma_U_def}. Then condition~\eqref{eqn:clark_kushner} holds.
\end{theorem}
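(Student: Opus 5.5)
The plan is to decompose the stochastic input $U$ into a martingale difference plus a recursively defined remainder, exactly as announced in the introduction, and then to absorb the recursion by a summation-by-parts argument. Write $p$ for the diagonal matrix (acting blockwise on $\R^{md}$) with the entry $p_i$ on the coordinates of walk $i$. By~\eqref{eqn:Q_structure}, the conditional law of $W^i(n+1)$ given $\F_n$ is the row $W^i(n)$ of $Q^i(X(n),p_i)$. Hence
\[
\E[\xi(n+1)\mid\F_n]=p\,\pi(X(n))+(1-p)\,\xi(n).
\]
Set $D(n+1):=\xi(n+1)-\E[\xi(n+1)\mid\F_n]$; this is a bounded martingale difference with $\|D(n+1)\|\le 2m$. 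Substituting into~\eqref{eqn:gamma_U_def} gives
\[
U(n+1)=D(n+1)+(1-p)\big(\xi(n)-\pi(X(n))\big)=D(n+1)+(1-p)U(n)+E(n),
\]
where $E(n):=(1-p)\big(\pi(X(n-1))-\pi(X(n))\big)$. By Assumption~\ref{ass:lipschitz}, $\pi(x)=F(x)+x$ is Lipschitz. By Proposition~\ref{prop:recursion}, $\|X(n)-X(n-1)\|\le C/(n+1)$. Therefore $\|E(n)\|\le C'/(n+1)$.

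Next, multiply the decomposition (with index shifted, $U(k)=D(k)+(1-p)U(k-1)+E(k-1)$) by $\gamma_k$ and sum over $n\le k\le r-1$. Reindex the term $\sum_k\gamma_k U(k-1)=\sum_{j=n-1}^{r-2}\gamma_{j+1}U(j)$ and compare it with $\sum_{j}\gamma_j U(j)$. The difference consists of two boundary terms of size $O(\gamma_{n-1})$, since $U$ is bounded, and a sum $\sum_j(\gamma_{j+1}-\gamma_j)U(j)$ bounded by $C\sum_{j\ge n-1}1/j^2$. Writing $S_{n,r}:=\sum_{k=n}^{r-1}\gamma_kU(k)$, this yields
\[
p\,S_{n,r}=\sum_{k=n}^{r-1}\gamma_kD(k)+\sum_{k=n}^{r-1}\gamma_kE(k-1)+R_{n,r},\qquad \sup_{r\ge n}\|R_{n,r}\|\le C''\Big(\gamma_{n-1}+\sum_{j\ge n-1}j^{-2}\Big).
\]
Since every $p_i>0$, the matrix $p$ is invertible with $\|p^{-1}\|\le 1/\min_i p_i$. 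It therefore suffices to show that each term on the right tends to zero uniformly in $r$; then the supremum in~\eqref{eqn:clark_kushner} over all $r\ge n$, in particular over $0\le\tau_r-\tau_n\le T$, tends to $0$.

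The remaining estimates are as follows.
\begin{enumerate}[(i)]
\item $\sum_{k}\gamma_kE(k-1)$ is absolutely convergent, since its terms are $O(1/k^2)$, so its tails vanish deterministically.
\item $R_{n,r}\to0$ uniformly in $r$ by the bound above.
\item $M_r:=\sum_{k\le r}\gamma_kD(k)$ is a martingale with $\sup_r\E\|M_r\|^2\le C\sum_k\gamma_k^2<\infty$. Hence it converges almost surely, and by the Cauchy criterion $\sup_{r\ge n}\|M_r-M_{n-1}\|\to0$ almost surely.
\end{enumerate}

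The step I expect to need most care is the reindexing and summation by parts: the lagged term $(1-p)U(k-1)$ must be absorbed so that the coefficient $p$, rather than a possibly degenerate factor, multiplies $S_{n,r}$. All boundary and increment terms must also be controlled uniformly in $r$. This control rests only on the boundedness of $U$ and on $\gamma_k-\gamma_{k+1}=O(k^{-2})$.
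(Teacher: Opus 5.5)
Your proof is correct, and it takes a different route from the paper after the common first step. Both arguments start from the decomposition $U(k)=D(k)+(1-p)U(k-1)+E(k-1)$.

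\textbf{The paper's route.} The paper solves this recurrence explicitly. It writes $U(k)$ as a geometrically weighted sum $\sum_j(1-p)^{k-j}D(j)$, plus a decaying initial term $(1-p)^{k-n}U(n)$, plus a weighted sum of the $E(j)$. It then interchanges the order of summation, so the martingale part becomes $\sum_j c^r_jD(j)$ with $c^r_j=\sum_{k=j}^{r-1}\gamma_k(1-p)^{k-j}$. Finally it splits $c^r_j=\gamma_j/p-d^r_j$ and bounds $d^r_j$ using the closed form $\gamma_k=1/(k+2)$.

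\textbf{Your route.} You sum the recurrence against $\gamma_k$ and absorb the lagged term by summation by parts. This gives the identity $pS_{n,r}=\sum\gamma_kD(k)+\sum\gamma_kE(k-1)+R_{n,r}$ in one step. The leading term $p^{-1}\sum\gamma_kD(k)$ appears directly. The only cost of the lag is the remainder $R_{n,r}$, which is at most $4m(1-p)\gamma_n$ uniformly in $r$, since $\sum_{j=n}^{r-2}(\gamma_j-\gamma_{j+1})\le\gamma_n$.

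\textbf{What your route buys.} It is shorter and needs no closed-form estimate of $\gamma_j-\gamma_{j+\ell}$. It also treats distinct $p_i$ simultaneously through the diagonal matrix $p$, instead of reducing to $m=1$. It is in fact the Poisson-equation method in disguise. The function $h_x(w)=p^{-1}(e_w-\pi(x))$ solves $h_x-Q(x,p)h_x=e_w-\pi(x)$, because $\Pi(x)h_x=0$. That formulation is the one most likely to extend to the non-complete-graph setting suggested in the concluding remarks.

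\textbf{What the paper's route buys.} Its explicit solution shows the geometric memory of the noise term by term. That could be useful for finer questions, such as the fluctuation structure of $U$. The price is the extra bookkeeping on $c^r_j$ and $d^r_j$.
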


\subsection{Proof of Theorem~\ref{thm:main}}\label{subsec:proof}


%



To lighten the notation, we assume without loss of generality that $m = 1$ and omit the index $i$. Thus, we write $X(n)$, $\xi(n)$, $W(n)$, $\pi(x)$, $Q(x, p)$, $\Pi(x)$ instead of $X^1(n)$, $\xi^1(n)$, $W^1(n)$, $\pi^1(x)$, $Q^1(x, p_1)$, $\Pi^1(x)$, respectively.

\medskip
\noindent\textbf{Central idea of the proof.} First, we observe that a direct estimate of the sum $\sum_{k=n}^{r-1} U(k)\gamma_k$ by the triangle inequality would yield $\sum_{k=n}^{r-1} \|U(k)\|\gamma_k \leq 2\sum_{k=n}^{r-1} \gamma_k$, which diverges since $\sum \gamma_k = \infty$. The key to the proof is to exploit the special structure of the transition matrix $Q(x, p) = p\Pi(x) + (1-p)I$, the Lipschitz continuity of $F(x)$, and the representation $X(n+1) - X(n)$ in~\eqref{eqn:recursion} to deduce the recursive form of $U(n)$ in~\eqref{eqn:U_recursion} and decompose $U(n)$ into three parts: a martingale difference $D(n)$, a term proportional to $U(n-1)$ with factor $(1-p) < 1$, and a small correction term. The weighted martingale differences $\mathcal{D}(n) = \gamma_{n-1} D(n)$ are square-summable, so their sum $\mathcal{M}(n) = \sum_{j=1}^{n} \mathcal{D}(j)$ converges. This allows exploiting cancellations that overcome the divergent condition $\sum \gamma_k = \infty$. The remaining terms are controlled by the geometric decay $(1-p)^k$, which converts potentially divergent sums into bounded geometric series.

\medskip

The proof is organised in several steps.

\medskip
\noindent\textbf{Step 1: Decomposition of $U(n)$.}

Recall that $U(n) = \xi(n) - \pi(X(n-1))$ for $n \geq 1$, where $\xi(n) = (\xi_1(n), \ldots, \xi_d(n))$ is the indicator vector of the state visited at time $n$, i.e., $\xi_v(n) = 1$ if $W(n) = v$ and $\xi_v(n) = 0$ otherwise.

Adding and subtracting $\E[\xi(n)|\F_{n-1}]$ from $U(n)$, we obtain the decomposition:
\begin{equation}\label{eqn:U_decomp}
U(n) = D(n) + R(n-1),
\end{equation}
where $D(n) = \xi(n) - \E[\xi(n)|\F_{n-1}]$ is a martingale difference, i.e., $\E[D(n)|\F_{n-1}] = 0$, and $R(n-1) = \E[\xi(n)|\F_{n-1}] - \pi(X(n-1))$ is the bias term, which is $\F_{n-1}$-measurable.

\medskip
\noindent\textbf{Step 2: Calculation of $\E[\xi(n)|\F_{n-1}]$.}

Given $\F_{n-1}$, we know $W(n-1)$ and $X(n-1)$. The transition is governed by the matrix $Q(X(n-1), p)$. For each component $v \in \{1, \ldots, d\}$:
\[
\E[\xi_v(n)|\F_{n-1}] = \Prb(W(n) = v|\F_{n-1}) = q_{W(n-1),v}(X(n-1), p).
\]
In vector notation, using the fact that $\xi(n-1)$ is the canonical vector $e_{W(n-1)}$ (the vector with 1 in position $W(n-1)$ and 0 elsewhere):
\[
\E[\xi(n)|\F_{n-1}] = \xi(n-1) Q(X(n-1), p).
\]
Applying the structure~\eqref{eqn:Q_structure}, i.e., $Q(x, p) = p\Pi(x) + (1-p)I$:
\begin{align*}
\xi(n-1) Q(X(n-1), p) &= \xi(n-1)\big[p\Pi(X(n-1)) + (1-p)I\big]\\
&= p\,\pi(X(n-1)) + (1-p)\xi(n-1).
\end{align*}
The last equality follows from the fact that, for any row vector $\lambda$ (in particular for $\xi(n-1) = e_{W(n-1)}$), we have $\lambda\Pi(x) = \pi(x)$, since all rows of $\Pi(x)$ are equal to $\pi(x)$.

Therefore:
\begin{equation}\label{eqn:conditional_exp}
\E[\xi(n)|\F_{n-1}] = p\,\pi(X(n-1)) + (1-p)\xi(n-1).
\end{equation}

\medskip
\noindent\textbf{Step 3: Expression for $R(n-1)$.}

Substituting~\eqref{eqn:conditional_exp} into the definition of $R(n-1)$:
\begin{align*}
R(n-1) &= \E[\xi(n)|\F_{n-1}] - \pi(X(n-1))\\
&= (1-p)\big(\xi(n-1) - \pi(X(n-1))\big).
\end{align*}
Recall that $U(n-1) = \xi(n-1) - \pi(X(n-2))$. Thus:
\[
\xi(n-1) - \pi(X(n-1)) = U(n-1) + \pi(X(n-2)) - \pi(X(n-1)) = U(n-1) + \Delta_{n-1},
\]
where we define the correction term:
\begin{equation}\label{eqn:Delta_def}
\Delta_{n-1} := \pi(X(n-2)) - \pi(X(n-1)).
\end{equation}
Therefore:
\begin{equation}\label{eqn:R_expression}
R(n-1) = (1-p)U(n-1) + (1-p)\Delta_{n-1}.
\end{equation}

\medskip
\noindent\textbf{Estimate of $\|\Delta_{n-1}\|$.} Since $\pi$ is Lipschitz continuous with constant $L > 0$ and $X(n-1) - X(n-2) = \gamma_{n-2}(F(X(n-2)) + U(n-1))$:
\[
\|\Delta_{n-1}\| = \|\pi(X(n-2)) - \pi(X(n-1))\| \leq L\|X(n-2) - X(n-1)\| = L\gamma_{n-2}\|F(X(n-2)) + U(n-1)\|.
\]
Since $\|F(x)\| \leq 2$ and $\|U(n-1)\| \leq 2$ (because $\xi, \pi \in \triangle$), we obtain:
\begin{equation}\label{eqn:Delta_bound}
\|\Delta_{n-1}\| \leq 4L\gamma_{n-2}.
\end{equation}

\medskip
\noindent\textbf{Step 4: Recurrence for $U(n)$.}

From the decomposition~\eqref{eqn:U_decomp} and the expression~\eqref{eqn:R_expression}:
\begin{equation}\label{eqn:U_recursion}
U(n) = D(n) + (1-p)U(n-1) + E(n-1),
\end{equation}
where $E(n-1) = (1-p)\Delta_{n-1}$ is the error term, satisfying:
\begin{equation}\label{eqn:E_bound}
\|E(n-1)\| \leq 4L(1-p)\gamma_{n-2}.
\end{equation}

\medskip
\noindent\textbf{Step 5: Solution of the recurrence.}

Iterating the recurrence~\eqref{eqn:U_recursion} from an initial index $n_0$ to $k$, we obtain:
\begin{equation}\label{eqn:U_solution}
U(k) = \sum_{j=n_0+1}^{k} (1-p)^{k-j} D(j) + (1-p)^{k-n_0} U(n_0) + \sum_{j=n_0}^{k-1} (1-p)^{k-1-j} E(j).
\end{equation}
We name the three terms:
\begin{align}
A(n_0, k) &= \sum_{j=n_0+1}^{k} (1-p)^{k-j} D(j) \quad \text{(martingale term)}, \label{eqn:A_def}\\
B(n_0, k) &= (1-p)^{k-n_0} U(n_0) \quad \text{(initial condition)}, \label{eqn:B_def}\\
C(n_0, k) &= \sum_{j=n_0}^{k-1} (1-p)^{k-1-j} E(j) \quad \text{(correction term)}. \label{eqn:C_def}
\end{align}

\medskip
\noindent\textbf{Step 6: Analysis of the sum $\sum_{k=n}^{r-1} U(k)\gamma_k$.}

Using the expansion~\eqref{eqn:U_solution} with $n_0 = n$:
\begin{equation}\label{eqn:sum_decomp}
\sum_{k=n}^{r-1} U(k)\gamma_k = \underbrace{\sum_{k=n}^{r-1} \gamma_k A(n, k)}_{(\mathrm{I})} + \underbrace{\sum_{k=n}^{r-1} \gamma_k B(n, k)}_{(\mathrm{II})} + \underbrace{\sum_{k=n}^{r-1} \gamma_k C(n, k)}_{(\mathrm{III})}.
\end{equation}
We analyse each term separately.

\medskip
\noindent\textbf{Step 7: Analysis of Term (II) --- Initial condition.}

We have:
\[
(\mathrm{II}) = \sum_{k=n}^{r-1} \gamma_k (1-p)^{k-n} U(n) = U(n) \sum_{k=n}^{r-1} \gamma_k (1-p)^{k-n}.
\]
Since $\gamma_k$ is decreasing and $\sum_{\ell=0}^{\infty}(1-p)^\ell = 1/p$:
\[
\sum_{k=n}^{r-1} \gamma_k (1-p)^{k-n} \leq \frac{\gamma_n}{p}.
\]
Since $\|U(n)\| \leq 2$:
\begin{equation}\label{eqn:term_II}
\|(\mathrm{II})\| \leq \frac{2\gamma_n}{p} \xrightarrow{n \to \infty} 0, \quad \text{uniformly in } r.
\end{equation}

\medskip
\noindent\textbf{Step 8: Analysis of Term (III) --- Correction.}

We have:
\[
(\mathrm{III}) = \sum_{k=n}^{r-1} \gamma_k \sum_{j=n}^{k-1} (1-p)^{k-1-j} E(j).
\]
Exchanging the order of summation (for each fixed $j$, $k$ ranges from $j+1$ to $r-1$):
\[
(\mathrm{III}) = \sum_{j=n}^{r-2} E(j) \sum_{k=j+1}^{r-1} \gamma_k (1-p)^{k-1-j}.
\]
Since $\gamma_k$ is decreasing, the inner sum satisfies $\sum_{k=j+1}^{r-1} \gamma_k (1-p)^{k-1-j} \leq \gamma_{j+1}/p$.
Therefore:
\[
\|(\mathrm{III})\| \leq \sum_{j=n}^{r-2} \|E(j)\| \cdot \frac{\gamma_{j+1}}{p} \leq \sum_{j=n}^{r-2} 4L(1-p)\gamma_{j-1} \cdot \frac{\gamma_{j+1}}{p}.
\]
Since $\gamma_{j-1}\gamma_{j+1} \leq 2\gamma_j^2$:
\begin{equation}\label{eqn:term_III}
\|(\mathrm{III})\| \leq \frac{8L(1-p)}{p} \sum_{j=n}^{\infty} \gamma_j^2 \xrightarrow{n \to \infty} 0, \quad \text{uniformly in } r.
\end{equation}

\medskip
\noindent\textbf{Step 9: Analysis of Term (I) --- Martingale.}

We have:
\[
(\mathrm{I}) = \sum_{k=n}^{r-1} \gamma_k \sum_{j=n+1}^{k} (1-p)^{k-j} D(j).
\]
Exchanging the order of summation (for each fixed $j$, $k$ ranges from $j$ to $r-1$):
\[
(\mathrm{I}) = \sum_{j=n+1}^{r-1} D(j) \sum_{k=j}^{r-1} \gamma_k (1-p)^{k-j}.
\]
We define $c^r_j = \sum_{k=j}^{r-1} \gamma_k (1-p)^{k-j}$, which satisfies $c^r_j \leq \gamma_j/p$.

\medskip
\noindent\textbf{Decomposition.} We write:
\[
c^r_j = \frac{\gamma_j}{p} - d^r_j,
\]
where $d^r_j = \frac{\gamma_j}{p} - c^r_j \geq 0$.

Then:
\[
(\mathrm{I}) = \frac{1}{p} \sum_{j=n+1}^{r-1} \gamma_j D(j) - \sum_{j=n+1}^{r-1} d^r_j D(j).
\]

\medskip
\noindent\textbf{Analysis of the first term.} Define the weighted martingale difference $\mathcal{D}(j) = \gamma_{j-1} D(j)$. Observe that $\mathcal{D}(j)$ is a martingale difference, since $\gamma_{j-1}$ is deterministic and $\E[D(j)|\F_{j-1}] = 0$. Since $\|D(j)\| \leq 2$, we have $\E[\|\mathcal{D}(j)\|^2 | \F_{j-1}] \leq 4\gamma_{j-1}^2$. Therefore:
\[
\sum_{j=1}^{\infty} \E[\|\mathcal{D}(j)\|^2 | \F_{j-1}] \leq 4 \sum_{j=1}^{\infty} \gamma_{j-1}^2 < \infty.
\]
By the martingale convergence theorem (see, e.g., Durrett~\cite{D19}), the martingale $\mathcal{M}(n) = \sum_{j=1}^{n} \mathcal{D}(j) = \sum_{j=1}^{n} \gamma_{j-1} D(j)$ converges almost surely. Therefore, $\mathcal{M}(n)$ is a Cauchy sequence almost surely, and the difference $\mathcal{M}(r-1) - \mathcal{M}(n)$, viewed as a sequence of functions indexed by $n$ with argument $r-1 \geq n$, converges uniformly (in $r$) to zero as $n \to \infty$. That is:
\begin{equation}\label{eqn:martingale_tail}
\left\|\sum_{j=n+1}^{r-1} \gamma_{j-1} D(j)\right\| = \|\mathcal{M}(r-1) - \mathcal{M}(n)\| \xrightarrow{n \to \infty} 0 \quad \text{a.s., uniformly in } r.
\end{equation}
Since $\gamma_j \leq 2\gamma_{j-1}$ for all $j \geq 1$, we also have
\begin{equation}\label{eqn:martingale_tail_gamma_j}
\left\|\sum_{j=n+1}^{r-1} \gamma_j D(j)\right\| \xrightarrow{n \to \infty} 0 \quad \text{a.s., uniformly in } r.
\end{equation}

\medskip
\noindent\textbf{Analysis of the second term.} We have $d^r_j = \frac{\gamma_j}{p} - c^r_j$. Writing explicitly:
\[
d^r_j = \sum_{\ell=0}^{\infty} \gamma_j (1-p)^\ell - \sum_{\ell=0}^{r-1-j} \gamma_{j+\ell} (1-p)^\ell = \underbrace{\sum_{\ell=0}^{r-1-j} (\gamma_j - \gamma_{j+\ell})(1-p)^\ell}_{(a)} + \underbrace{\gamma_j \sum_{\ell=r-j}^{\infty} (1-p)^\ell}_{(b)}.
\]

For term $(b)$: $(b) = \frac{\gamma_j}{p}(1-p)^{r-j}$.

For term $(a)$: Since $\gamma_k = \frac{1}{k+2}$, we have $\gamma_j - \gamma_{j+\ell} = \frac{\ell}{(j+2)(j+\ell+2)} \leq \frac{\ell}{(j+2)^2}$. Hence:
\[
(a) \leq \frac{1}{(j+2)^2} \sum_{\ell=0}^{\infty} \ell (1-p)^\ell = \frac{1-p}{p^2(j+2)^2}.
\]

Therefore:
\[
d^r_j \leq \frac{1-p}{p^2(j+2)^2} + \frac{\gamma_j}{p}(1-p)^{r-j}.
\]

Thus:
\[
\left\|\sum_{j=n+1}^{r-1} d^r_j D(j)\right\| \leq 2 \sum_{j=n+1}^{\infty} \left[\frac{1-p}{p^2(j+2)^2} + \frac{\gamma_j}{p}(1-p)^{r-j}\right].
\]

The sum on the right-hand side of the inequality immediately above splits into two sums. Since $\gamma_n = 1/(n+2)$, both sums are $O(1/n)$ and go to zero as $n \to \infty$.

Hence:
\begin{equation}\label{eqn:d_bound}
\left\|\sum_{j=n+1}^{r-1} d^r_j D(j)\right\| \xrightarrow{n \to \infty} 0, \quad \text{uniformly in } r.
\end{equation}

Combining~\eqref{eqn:martingale_tail_gamma_j} and~\eqref{eqn:d_bound}:
\begin{equation}\label{eqn:term_I}
\sup_{r \geq n} \|(\mathrm{I})\| \xrightarrow{n \to \infty} 0 \quad \text{a.s.}
\end{equation}

\medskip
\noindent\textbf{Step 10: Conclusion.}

From~\eqref{eqn:sum_decomp},~\eqref{eqn:term_II},~\eqref{eqn:term_III}, and~\eqref{eqn:term_I}:
\[
\sup_{r \geq n} \left\|\sum_{k=n}^{r-1} U(k)\gamma_k\right\| \leq \|(\mathrm{I})\| + \|(\mathrm{II})\| + \|(\mathrm{III})\| \xrightarrow{n \to \infty} 0 \quad \text{a.s.}
\]
In particular, for any $T > 0$, since the condition $0 \leq \tau_r - \tau_n \leq T$ implies $r \geq n$:
\[
\lim_{n \to \infty} \sup_{r:\, 0 \leq \tau_r - \tau_n \leq T} \left\|\sum_{k=n}^{r-1} U(k)\gamma_k\right\| = 0 \quad \text{a.s.}
\]
This completes the proof of~\eqref{eqn:clark_kushner}. \qed

\section{Concluding Remarks}\label{sec:remarks}

In this work, we generalised the models of interacting vertex-reinforced random walks presented in~\cite{RPP22}, \cite{PCR2023} and~\cite{PR2025}, allowing each walk $i$ to transition at independent geometric times with distinct parameters $p_i \in (0,1]$.

In the original model, all walks transition simultaneously at discrete times $n = 1, 2, 3, \ldots$, and the transition probability of walk $i$ to vertex $v$ is given directly by $\pi^i_v(x)$, depending only on the proportion vector $x$ of visits by all walks to all vertices, independently of the position occupied by the walk at the preceding time. In the present work, we introduced a generalisation where each walk $i$ transitions only at its random transition times $\tau^i_1, \tau^i_2, \tau^i_3, \ldots$, remaining stationary at all other times. This modification implies that the transition matrix $Q^i(x, p_i)$ depends explicitly on the parameter $p_i$, as defined in~\eqref{eqn:q_def}.

The central result of this work, established in Theorem~\ref{thm:main}, demonstrates that the accumulation points of the process $X$ are contained in the zeros of the vector field $F(x) = -x + \pi(x)$. A notable aspect of the results obtained is that, although the dynamics of the process depends on the parameters $p_i$ of the geometric transition times, the invariant measure $\pi^i(x)$ of the matrix $Q^i(x, p_i)$ does not depend on $p_i$. Consequently, the accumulation points of the generalised process coincide with those of the original model where $p_i = 1$ for all $i$. In other words, the introduction of independent geometric transition times with distinct parameters does not alter the limiting points of process $X$.

Several relevant questions remain open. Although the accumulation points are the same, the probability distribution of the limiting vertex occupation proportions may be substantially affected by the parameters $p_i$. Investigating how the different parameters $p_i$ influence this distribution constitutes an interesting direction for future work. In particular, walks with larger parameters $p_i$ transition more frequently and may, in competitive dynamics, occupy more attractive vertices with higher probability, excluding opponent walks more effectively. Another point to be explored consists of identifying convergence points of similar processes where reinforced walks transition on non-complete graphs. The results obtained in this work suggest that such investigation may be approached in a similar manner, provided the transition probability matrices satisfy a uniform contractivity condition as a function of $x$. One natural candidate is a uniform bound on the Dobrushin coefficient of each matrix $Q^i(x, p_i)$, which controls the rate of convergence to the invariant measure and may provide the necessary regularity for extending the present arguments.


\bibliographystyle{amsplain}

\end{document}